\renewcommand{\epsilon}{\varepsilon}
\renewcommand{\phi}{\varphi}
\newcommand{\phixnull}{\phi_{x_0}}
\newcommand{\Dn}{{D_0}}
\newcommand{\Mtilde}{\tilde {M}}
\newcommand{\deltatilde}{{\tilde{\delta}}}
\newcommand{\la}{\langle}
\newcommand{\ra}{\rangle}
\begin{document}
\title{Monotone unitary families}

\author{Daniel Grieser}
\address{Institut f\"ur Mathematik, Carl von Ossietzky Universit\"at Oldenburg, D-26111 Oldenburg}
\email{grieser@mathematik.uni-oldenburg.de}
\keywords{Perturbation theory, spectral theory}
\subjclass[2000]{Primary
             47A55 
                         }
\begin{abstract}
A unitary family is a  family of unitary operators $U(x)$ acting on a finite dimensional hermitian vector space, depending analytically on a real parameter $x$. It is monotone if $\frac1i U'(x)U(x)^{-1}$ is a positive operator for each $x$. We prove a number of results generalizing standard theorems on the spectral theory of a single unitary operator $U_0$, which correspond to the 'commutative' case $U(x)=e^{ix}U_0$. Also, for a two-parameter unitary family -- for which there is no analytic perturbation theory -- we prove an implicit function type theorem for the spectral data under the assumption that the family is monotone in one argument.
\end{abstract}
\maketitle

\section{Introduction}
Let $U(x)$ be a family of unitary operators on a Hermitian vector space $V$ of
dimension $M<\infty$, depending real analytically on $x\in\R$ (or an interval in
$\R$). Then
\begin{equation}
\label{eqn monotone}
D(x) := \frac 1 i U'(x) U(x)^{-1}
\end{equation}
is symmetric. Here $U'(x)$ is the derivative with respect to $x$. We call  $U$ \emph{monotone} if $D(x)$ is a positive operator for all $x$.
Denote
$$ W(x)=\Ker (I-U(x))\quad\text{ and } \calZ = \{x:\, W(x)\neq \{0\}\}.$$
Thus $x\in\calZ$ iff $U(x)$ has eigenvalue one.

A model case for this setup is $U(x)=e^{ix}U_0$ for a unitary $U_0$. Then  $W(x)$ is the eigenspace of $U_0$ with
eigenvalue $e^{-ix}$. Standard facts from the spectral theory of $U_0$ may be restated in terms of $\calZ$ and $W(x)$, for example:
\begin{itemize}
\item
$\calZ$ is a $2\pi$-periodic sequence, having exactly $M$ terms in each half-open interval of length $2\pi$ (counting 'multiplicities').
\item
If $I$ is an interval of length less than $2\pi$ then the spaces $W(x)$, $x\in I$, are linearly independent (even pairwise orthogonal).
\item
If $\|(I-U(x_0))\phi \| \leq \epsilon\|\phi\|$ for some $\phi\in V\setminus\{0\}$, $x_0\in\R$ and $\epsilon\geq 0$ then $e^{-ix_0}$ lies within distance $\epsilon$ of an eigenvalue of $U_0$ (and so $\dist(x_0,\calZ)\leq\pi\epsilon/2$). Furthermore, if $\epsilon'>\epsilon$ and $P$ denotes the orthogonal projection to $\bigoplus_{x:|e^{-ix}-e^{-ix_0}|<\epsilon'} W(x)$ then
\begin{equation}
\label{eqn Pphi estimate}
\|\phi-P\phi\| \leq \frac\epsilon{\epsilon'} \|\phi\|.
\end{equation}
\end{itemize}

In this paper we prove generalizations of these facts to arbitrary monotone unitary families, see Theorem \ref{thm counting ev} in Section \ref{sec counting ev}
and Theorems \ref{thm direct sum W} and \ref{thm stability} in Section \ref{sec eigenspaces}. The estimates are expressed in terms of uniform bounds on the first and second derivatives of $U$: Assume $\dmin,\dmax,d_2>0$ are such that
\begin{equation}
\label{eqn dminmax def}
\dmin I\leq D(x) \leq \dmax I, \quad \|U''(x)\| \leq d_2\quad\text{ for all }x.
\end{equation}
Of course, such constants exist always locally. However, in applications of our results, see \cite{Gri:SGNS}, it is essential that the estimates are uniform in terms of \eqref{eqn dminmax def}, and do not depend on additional data like separation of elements of $\calZ$, see the explanation below.

An important difference between a general unitary family and the model case
is that $U(x)$ and $U(x')$ do not in general commute for $x\neq x'$. A consequence of this is that, while one may take a logarithm of $U$, i.e. find an analytic family $A(x)$ of symmetric operators such that
\begin{equation}
\label{eqn def A}
U(x) = e^{iA(x)}\quad\text{ for all }x,
\end{equation}
it is usually not true that $D(x)$ equals $A'(x)$, or even that positivity of $D(x)$ implies positivity of $A'(x)$. The opposite implication is true, however. See Section \ref{secpositive}.

Finally, we prove a result on two-parameter perturbation theory. Recall the main result of one-parameter perturbation theory (see \cite{Rel:SSI}, \cite{Kat:PTLO}): For an analytic family $U(x)$ of unitary operators on $V$, there are real analytic functions $\mu_j(x)$, $\phi_j(x)$ for $j=1,\dots,M=\dim V$ having values in $\R$ and $V$, respectively, such that for each $x$ the eigenvalues of $U(x)$ are $e^{i\mu_j(x)}$, with corresponding orthonormal basis
of eigenvectors $\phi_j(x)$:
\begin{equation}
\label{eqn def muj phij}
U(x)\phi_j(x) = e^{i\mu_j(x)}\phi_j(x).
\end{equation}
It is well-known that the analogous statement for two-parameter families of operators is false in general. However, we prove a related implicit function type theorem for the spectral data of a two-parameter family for which the dependence on one parameter is monotone.
It may be regarded as the natural unitary family generalization of the one-parameter perturbation theory. See Theorem \ref{thm unitary perturb} in Section \ref{sec two parameters}.

The analytic functions $\mu_j$ and $\phi_j$ play a central role in the proofs of our theorems. It is essential to control their derivatives. For  $\mu_j$ this is easy from \eqref{eqn dminmax def}. However, $\phi_j$ may vary wildly whenever $e^{i\mu_j}$ is very close to another eigenvalue.
To control this variation is the main technical problem in the proof of the generalization of \eqref{eqn Pphi estimate}, Theorem \ref{thm stability}. Note that, unlike in the model case, one may not assume that the $e^{i\mu_j(x)}$ for fixed $x$ but varying $j$ are uniformly separated, or equivalently that the elements of $\calZ$ are uniformly separated. This can already be seen in the simple example $U(x)=e^{ixL}$, where $L$ is a diagonal matrix with positive diagonal entries that are independent over $\Q$: If $M\geq 2$ then for any $\epsilon>0$ there are $x,x'\in\calZ$ satisfying $0<|x-x'|<\epsilon$.
\medskip

The problems we study here arose in the context of a singular perturbation problem: In \cite{Gri:SGNS} we study the eigenvalues and eigenfunctions of the Laplacian on a space $X^N$ which has a fixed compact part connected by cylindrical  necks of length $N>0$, and in particular their asymptotic behavior as $N\to\infty$. The unitary families arise from the scattering matrix of the limit problem (infinitely long 'necks').

\section{Eigenvalue distribution} \label{sec counting ev}
\begin{theorem}
\label{thm counting ev}
Let $U$ be a monotone unitary family on $\R$. Then
$\calZ\subset\R$ is a discrete subset, and more precisely for all $A<B$
\begin{equation}
\label{eqn asymp}
\left| \sum\limits_{x:A< x < B} \dim W(x) - \frac 1 {2\pi}\int_A^B \tr D(x)\, dx
\right| < M (:=\dim V)
\end{equation}
\end{theorem}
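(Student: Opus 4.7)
The strategy is to reduce the statement to a one-dimensional lattice counting problem by using the analytic parametrization \eqref{eqn def muj phij} of the spectrum of $U(x)$ furnished by one-parameter perturbation theory. Starting from $U(x)\phi_j(x)=e^{i\mu_j(x)}\phi_j(x)$, I differentiate in $x$, substitute $U'(x)=iD(x)U(x)$, and pair with $\phi_j(x)$ to get
\[
\mu_j'(x) = \la D(x)\phi_j(x),\phi_j(x)\ra.
\]
Positivity of $D$ then makes each $\mu_j$ strictly increasing, and since $\phi_1(x),\dots,\phi_M(x)$ is an orthonormal basis,
\[
\sum_{j=1}^{M}\mu_j'(x) = \tr D(x).
\]

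The second step is to translate both sides of \eqref{eqn asymp} into scalar statements about the functions $\mu_j$. By construction, $x\in\calZ$ iff $\mu_j(x)\in 2\pi\Z$ for some $j$, and $\dim W(x) = \#\{j:\mu_j(x)\in 2\pi\Z\}$. The strict monotonicity and analyticity of each $\mu_j$ make $\mu_j^{-1}(2\pi\Z)$ discrete, so $\calZ$ is discrete and the sum on the left of \eqref{eqn asymp} is finite. Setting $n_j := \#\{x\in(A,B) : \mu_j(x)\in 2\pi\Z\}$, which equals the number of multiples of $2\pi$ lying strictly between $\mu_j(A)$ and $\mu_j(B)$, I obtain
\[
\sum_{x:\,A<x<B}\dim W(x) = \sum_{j=1}^{M} n_j,\qquad \frac{1}{2\pi}\int_A^B \tr D(x)\,dx = \sum_{j=1}^{M}\frac{\mu_j(B)-\mu_j(A)}{2\pi}.
\]

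The final ingredient is the elementary lattice-counting fact that for any reals $a<b$, the number of integers in the open interval $(a,b)$ differs from $b-a$ by at most $1$ in absolute value. Applied with $a=\mu_j(A)/(2\pi)$, $b=\mu_j(B)/(2\pi)$ and summed over $j=1,\dots,M$, this yields the bound \eqref{eqn asymp}. I do not anticipate any serious obstacle: the substantive input is the classical analytic perturbation theorem, and everything else is a short calculation. The only delicate point is the strict inequality $<M$ versus the naive $\le M$: the pointwise error $n_j-\tfrac{\mu_j(B)-\mu_j(A)}{2\pi}$ lies in $[-1,1)$, and equality $-1$ occurs only when $\mu_j(A),\mu_j(B)$ both lie on the $2\pi$-lattice, so a small case check is needed to rule out simultaneous saturation in every index and to recover the claimed strict bound.
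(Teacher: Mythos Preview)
Your approach is exactly the paper's: differentiate \eqref{eqn def muj phij} to get $\mu_j'=\la D\phi_j,\phi_j\ra>0$, sum to obtain $\sum_j(\mu_j(B)-\mu_j(A))=\int_A^B\tr D$, and reduce to counting integers in each interval $(\mu_j(A)/2\pi,\mu_j(B)/2\pi)$.

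Your caution in the last paragraph is in fact sharper than the paper's own argument, which simply asserts $|R_j|<1$. As you note, the error $R_j$ actually lies in $[-1,1)$, with $R_j=-1$ precisely when both $\mu_j(A)$ and $\mu_j(B)$ are in $2\pi\Z$. This edge case is not excluded by the hypotheses: for $U(x)=e^{ix}I$ and $A=0$, $B=2\pi$ one gets $\sum_{A<x<B}\dim W(x)=0$ while $\frac{1}{2\pi}\int_A^B\tr D=M$, so the difference has absolute value exactly $M$. Thus the strict inequality in \eqref{eqn asymp} should really be $\le M$ (or the interval should be taken half-open), and your ``small case check'' cannot rule out simultaneous saturation. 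This is a minor imprecision in the statement rather than in your argument.
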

In the special case $U(x)=e^{ixL}U_0$, $L>0$, this implies the asymptotics
$$ \sum\limits_{x:0< x < B} \dim W(x) \sim \frac{\tr L}{2\pi} B,\quad B\to\infty.$$ This is the Weyl asympotitcs of a quantum graph, see for example \cite{GnuSmi:QGAQCUSS}. We give a much simpler proof than \cite{GnuSmi:QGAQCUSS}.

First, we differentiate \eqref{eqn def muj phij} and obtain monotonicity of the functions $\mu_j$ from monotonicity of $U$:
\begin{equation}
\label{eqn muj monotone}
\mu_j' = (\frac 1i U' U^{-1}\phi_j,\phi_j) = (D\phi_j,\phi_j) > 0.
\end{equation}

Also, for each $x$ we have
\begin{equation}
\label{eqn Wx characterization}
W(x)=\Span\{\phi_j(x):\, \mu_j(x) \in 2\pi\Z\}
\end{equation}

\begin{proof}[Proof of Theorem \ref{thm counting ev}]
We have
\begin{align*}
\sum_j \left(\mu_j(B) - \mu_j (A)\right) &= \int_A^B \sum_j \la
D(x)\phi_j(x),\phi_j(x)\ra\, dx \\
                                         &= \int_A^B \tr D(x)\, dx
\end{align*}

Since the $\mu_j$ are strictly increasing, we get from \eqref{eqn Wx
characterization}
\begin{equation*}
\sum_{x: A<x<B} \dim W(x) = \sum_j \#\{k\in\Z:\, \mu_j(A) < 2\pi k < \mu_j(B)\} =
\sum_j \left(\frac{\mu_j(B)-\mu_j(A)}{2\pi} + R_j\right)
\end{equation*}
with $|R_j|<1$, and this gives \eqref{eqn asymp}.
\end{proof}

\section{Eigenspaces} \label{sec eigenspaces}
In this section we consider monotone unitary families satisfying the estimates \eqref{eqn dminmax def}. By \eqref{eqn muj monotone} we have
\begin{equation}
\label{eqn mu monotone}
\dmin \leq \mu_j'(x) \leq \dmax\quad \text{ for all }x \text{ and }j=1,\dots,M.
\end{equation}
First, we have independence of eigenspaces.
\begin{theorem}
\label{thm direct sum W}
Let $U$ be a monotone unitary family satisfying \eqref{eqn dminmax def}.
Let $I$ be an interval of length at most $\frac {2\dmin}{d_2 M}$. Then the spaces
$W(x),$ $x\in I$, are independent, i.e.
\begin{equation}
\label{eqn Wx independent}
\text{If }\phi_x\in W(x)\text{ for each }x\in I\cap\calZ\text{ and }\sum_x
\phi_x=0\text{ then }\phi_x=0\ \forall x.
\end{equation}
\end{theorem}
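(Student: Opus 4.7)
My plan is to derive a quantitative approximate orthogonality inequality between eigenspaces $W(x)$ and $W(y)$, then combine it with a minimality-based dimension count. The starting point is an exact identity: for $\phi\in W(x)$ and $\psi\in W(y)$, unitarity together with $U(y)\psi=\psi$ yields $U(y)^*\psi=\psi$, so that
\begin{equation*}
(U(y)\phi,\psi) = (\phi,U(y)^*\psi) = (\phi,\psi) = (U(x)\phi,\psi),
\end{equation*}
whence $((U(y)-U(x))\phi,\psi)=0$.

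Next I would Taylor-expand $U(y)-U(x)$ around $x$ to second order. Using $U'(x) = iD(x)U(x)$, the identity $U(x)\phi = \phi$, and the bound $\|U''\|\le d_2$ from \eqref{eqn dminmax def}, the first-order term contributes $i(y-x)(D(x)\phi,\psi)$, while the remainder is bounded by $\tfrac{d_2(y-x)^2}{2}\|\phi\|\|\psi\|$. Dividing by $|y-x|$ produces the key estimate
\begin{equation*}
|(D(x)\phi,\psi)| \le \frac{d_2|y-x|}{2}\|\phi\|\|\psi\|.
\end{equation*}

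To conclude, assume a nontrivial relation $\sum_k \phi_{x_k}=0$ with $\phi_{x_k}\in W(x_k)\setminus\{0\}$ and the number $K$ of summands chosen minimal. Minimality forces the $\phi_{x_k}$ to be minimally linearly dependent, so $K-1\le M$. Pairing the relation against $D(x_k)\phi_{x_k}$ and applying the key estimate to each off-diagonal term yields
\begin{equation*}
\dmin\|\phi_{x_k}\|^2 \le (D(x_k)\phi_{x_k},\phi_{x_k}) = -\sum_{l\ne k}(D(x_k)\phi_{x_k},\phi_{x_l}) \le \frac{d_2\ell}{2}\|\phi_{x_k}\|\sum_{l\ne k}\|\phi_{x_l}\|,
\end{equation*}
where $\ell$ is the length of $I$. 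Choosing $k$ so that $\|\phi_{x_k}\|$ is maximal and invoking $K-1\le M$ gives $\dmin \le \tfrac{d_2\ell M}{2}$, contradicting the strict form $\ell<\tfrac{2\dmin}{d_2 M}$. The boundary case $\ell=\tfrac{2\dmin}{d_2 M}$ forces equality throughout; in particular $|x_k-x_l|=\ell$ for every $l\ne k$, which is impossible for $K\ge 3$ by distinctness of the $x_l$, while the residual sub-case $M=1,\,K=2$ does not arise because for $M=1$ one easily checks $d_2\ge\dmax^2$, whence the hypothesized upper bound on $\ell$ is strictly smaller than the minimal spacing $2\pi/\dmax$ of consecutive elements of $\calZ$.

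The main technical step I expect is the Taylor estimate with its explicit constant $\tfrac{d_2}{2}$; the subsequent combinatorial argument and the handling of the equality case are routine bookkeeping.
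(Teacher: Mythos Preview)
Your approach is essentially the same as the paper's: you establish the identical almost-orthogonality estimate $|(D(x)\phi,\psi)|\le \tfrac{d_2}{2}|x-y|\,\|\phi\|\,\|\psi\|$ (the paper isolates this as a separate lemma, proved by Taylor-expanding $U(y)U(x)^{-1}$ rather than $U(y)-U(x)$, which is equivalent), then pair the relation against $D(x_k)\phi_{x_k}$ for the term of maximal norm and force a contradiction. Your minimality device to obtain $K-1\le M$ is actually a clarification of a point the paper glosses over: the paper bounds the number of off-diagonal terms by $M-1$ without explaining why there are at most $M$ points of $\calZ$ in $I$, whereas your minimal-relation argument supplies exactly this bound (with $M$ in place of $M-1$, matching the theorem's stated constant).

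One small correction in your boundary case $M=1$, $K=2$: the inequality $d_2\ge\dmax^2$ need not hold, since $\dmax$ is merely an assumed upper bound on $D$, not the supremum. What \emph{is} true for $M=1$ is $|U''(x)|^2=\mu''(x)^2+\mu'(x)^4\ge\mu'(x)^4$, so $\mu'(x)\le\sqrt{d_2}$ for all $x$; hence consecutive elements of $\calZ$ are separated by at least $2\pi/\sqrt{d_2}$, while $\ell=2\dmin/d_2\le 2/\sqrt{d_2}<2\pi/\sqrt{d_2}$, giving the desired contradiction.
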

The following theorem gives a stable version of almost orthogonality.
\begin{theorem} \label{thm stability}
Let $U$ be a monotone unitary family satisfying \eqref{eqn dminmax def}.
Assume $\phi\in V\setminus 0$ satisfies
\begin{equation}
\label{eqn phiest}
\|(I-U(x_0))\phi\| \leq \epsilon \|\phi\|.
\end{equation}
Then
\begin{equation}
\label{eqn dist to Z}
\dist (x_0,\calZ) \leq \frac\pi 2\frac {\epsilon}{\dmin}.
\end{equation}
Furthermore, there is a constant $C$ only depending on $\dmin,\dmax,d_2,M$ such that the following holds: Suppose $0<\epsilon<\epsilon'<C^{-1}$ and ${\epsilon}/\epsilon'< C^{-1}$. Denote by $P_W$ the orthogonal projection to
$W=\bigoplus_{|x-x_0|\leq\epsilon'} W(x)$. Then
\begin{equation}
\label{eqn dist to sum of W}
\|\phi - P_W\phi\| \leq C\left(\frac\epsilon{\epsilon'}\right)^{\frac1{M+1}}\|\phi\|.
\end{equation}
\end{theorem}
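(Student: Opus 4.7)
The bound \eqref{eqn dist to Z} follows from the spectral theorem applied to the normal operator $I-U(x_0)$: expanding $\phi$ in the orthonormal basis $\{\phi_j(x_0)\}$ gives $\min_j|1-e^{i\mu_j(x_0)}|\le \epsilon$, so some $\mu_{j_0}(x_0)$ lies within $\pi\epsilon/2$ of $2\pi\Z$ (using $|\sin t|\ge 2|t|/\pi$ on $|t|\le\pi/2$), and the monotonicity $\mu_{j_0}'\ge \dmin$ from \eqref{eqn mu monotone} produces a point of $\calZ$ within $\pi\epsilon/(2\dmin)$ of $x_0$.

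For \eqref{eqn dist to sum of W} I would set $c := (\epsilon'/\epsilon)^{1/(M+1)}$ and carry out a pigeonhole: consider the $M+1$ arcs $A_k := \{\theta:r_{k-1}<|\theta|<r_k\}$ with $r_k := \epsilon c^k$, $k=1,\dots,M+1$, applied to the eigenphases $z_j:=\mu_j(x_0)\bmod 2\pi\in (-\pi,\pi]$. Since there are $M+1$ arcs and only $M$ eigenphases, one arc, say with inner radius $r:=\epsilon c^{k^*-1}$, is empty. Decompose $\phi=\phi_{<r}+\phi_{\ge r}$ via the spectral projection of $U(x_0)$ onto $V_{<r}:=\Span\{\phi_j(x_0):|z_j|\le r\}$. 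Because $I-U(x_0)$ commutes with this projection and $|1-e^{i\theta}|\ge 2rc/\pi$ for $|\theta|\ge rc$ (provided $C$ keeps $rc\le\pi$), the hypothesis \eqref{eqn phiest} yields $\|\phi_{\ge r}\|\le \frac{\pi\epsilon}{2rc}\|\phi\|\le \frac{\pi}{2c}\|\phi\|$, already of the desired order $c^{-1}$.

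To approximate $\phi_{<r}$ by a vector in $W$, fix $\rho\in(r,rc)$ (say $\rho=r(c+1)/2$) and let $P(x)$ be the spectral projection of $U(x)$ onto eigenvalues $e^{i\theta}$ with $|\theta\bmod 2\pi|<\rho$. Choosing $C$ so large that $c>1+2\dmax/\dmin$ prevents any eigenvalue of $U(x)$ from crossing $|\theta|=\rho$ while $|x-x_0|\le r/\dmin$; hence $P(x)$ is analytic of constant rank on this interval with $P(x_0)=P_{<r}(x_0)$, the Riesz contour formula combined with $\|U'\|\le \dmax$ gives $\|\dot P(x)\|\le C_1\dmax/(rc)$, and the Kato transport $T(x)$ defined by $T(x_0)=I$, $T(x)P(x_0)T(x)^{-1}=P(x)$, satisfies $\|T(x)-I\|\le C_2/c$ uniformly. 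For each $j\in J_{<r}:=\{j:|z_j|\le r\}$ monotonicity produces a unique $x_j$ with $\mu_j(x_j)\in 2\pi\Z$ and $|x_j-x_0|\le r/\dmin\le\epsilon'$, so $\phi_j(x_j)\in W(x_j)\subseteq W$; by Theorem \ref{thm direct sum W} (applicable after enlarging $C$ so the relevant interval has length $\le 2\dmin/(d_2 M)$) the $\phi_j(x_j)$ are linearly independent, so the pullbacks $\tilde\psi_j:=T(x_j)^{-1}\phi_j(x_j)$ form a basis of $V_{<r}$. Writing $\phi_{<r}=\sum_j \gamma_j\tilde\psi_j$ and setting $\phi_W:=\sum_j \gamma_j \phi_j(x_j)\in W$, the difference telescopes as
\begin{equation*}
\phi_{<r}-\phi_W=\sum_j \gamma_j(T(x_j)^{-1}-I)\phi_j(x_j),\qquad \|\phi_{<r}-\phi_W\|\le \frac{C_2\sqrt{M}}{c}\|\gamma\|_2.
\end{equation*}

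The main obstacle, and the technical point foreshadowed in the introduction, is converting $\|\gamma\|_2$ into $\|\phi_{<r}\|$; this requires a uniform lower bound on the smallest eigenvalue of the Gram matrix $G_{jk}:=(\tilde\psi_j,\tilde\psi_k)$. Since $\|\tilde\psi_j-\phi_j(x_j)\|\le C_2/c$, the problem reduces to quantitative near-orthogonality $(\phi_j(x_j),\phi_k(x_k))=\delta_{jk}+O(c^{-1})$ of the basis vectors placed in $W$. This cannot be read off directly from pointwise bounds on $\phi_j'$, which may blow up at eigenvalue crossings inside the cluster; however the wild component of $\phi_j'$ lies tangent to $\mathrm{range}\,P(s)$, and my plan is to decompose $\phi_j(x_j)-\phi_j(x_k)$ into its $P(s)$-tangential part (whose inner product with $\phi_k(x_k)\in\mathrm{range}\,P(x_k)$ picks up only the slow variation $\|P(s)-P(x_k)\|\le C_3/c$) and its transverse part (bounded directly by $\|\dot P\|\cdot|x_j-x_k|\le C_4/c$). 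Granting this conditioning bound, combining $\|\phi-\phi_{<r}\|\le \pi/(2c)\|\phi\|$ with $\|\phi_{<r}-\phi_W\|\le C_5/c\cdot\|\phi\|$ and the identification $c^{-1}=(\epsilon/\epsilon')^{1/(M+1)}$ yields \eqref{eqn dist to sum of W}.
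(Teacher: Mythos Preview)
Your argument for \eqref{eqn dist to Z} matches the paper's exactly, and the overall architecture for \eqref{eqn dist to sum of W} --- pigeonhole to locate a spectral gap of geometric width $c=(\epsilon'/\epsilon)^{1/(M+1)}$, a bound on the variation of the cluster projection $P(x)$ across the window, and reduction to a conditioning estimate for vectors drawn from the various $W(x)$ --- is the paper's strategy too, so the shape is right.

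The genuine gap is the conditioning step. You need the Gram matrix of $\{\tilde\psi_j\}$ (equivalently of $\{\phi_j(x_j)\}$, up to $O(1/c)$) bounded below, and you propose to obtain this via $(\phi_j(x_j),\phi_k(x_k))=\delta_{jk}+O(1/c)$ from a tangential/transverse splitting of $\phi_j'$. But that splitting is circular: since $P(s)\phi_j(s)=\phi_j(s)$, differentiating gives $(I-P(s))\phi_j'(s)=P'(s)\phi_j(s)$, so the transverse integral over $[x_k,x_j]$ is exactly $\int P'(s)\phi_j(s)\,ds$, and hence the tangential integral equals $\bigl(\phi_j(x_j)-\phi_j(x_k)\bigr)-\int P'(s)\phi_j(s)\,ds$. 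Pairing with $\phi_k(x_k)$ just returns $(\phi_j(x_j),\phi_k(x_k))$ minus an $O(1/c)$ term; nothing has been estimated. More conceptually, the tangential part of $\phi_j'(s)$ can have arbitrarily large norm and lives in $\operatorname{range}P(s)$, which is \emph{close} to, not separated from, $\operatorname{range}P(x_k)\ni\phi_k(x_k)$; closeness of the ranges gives no smallness of the pairing. Note also that your argument never uses the second-derivative bound $\|U''\|\le d_2$, which is a hint that an ingredient is missing.

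The paper supplies that ingredient as Lemma~\ref{lemma almost orthogonal}: a second-order Taylor expansion of $U(y)U(x)^{-1}$ yields
\[
|\langle D(x)\phi,\psi\rangle|\le \tfrac{d_2}{2}\,|x-y|\,\|\phi\|\,\|\psi\|\qquad(\phi\in W(x),\ \psi\in W(y)).
\]
This is near-orthogonality in the $D_0$-inner product, not the Euclidean one; combined with the norm equivalence $\dmin\|\cdot\|^2\le\|\cdot\|_{D_0}^2\le\dmax\|\cdot\|^2$ it gives $\|\sum_x\psi_x\|^2\ge c'\sum_x\|\psi_x\|^2$ with $c'$ depending only on $\dmin,\dmax,d_2,M$, which is precisely the conditioning you need. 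Once this replaces your tangential/transverse paragraph, your construction of $\phi_W$ via the Kato transport goes through and is essentially equivalent to the paper's subspace comparison between $W'$ and the range of $P_\delta(x_0)$.
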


In the proofs of Theorems \ref{thm direct sum W} and \ref{thm stability} we will
need the following estimate, which replaces orthogonality of the eigenspaces of a
unitary operator.
\begin{lemma}
\label{lemma almost orthogonal}
If $\phi\in W(x)$, $\psi\in W(y)$ and $x\neq y$ then
\begin{equation}
\label{eqn almost orthogonal}
|\la D(x)\phi,\psi\ra| \leq \frac{d_2}2 |x-y|\cdot\|\phi\|\cdot\|\psi\|.
\end{equation}
\end{lemma}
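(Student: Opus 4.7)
The plan is to introduce the scalar function
\[
g(t) := \la (I-U(t))\phi,\psi\ra
\]
and extract the desired inequality from the fact that $g$ vanishes at both $t=x$ and $t=y$. Vanishing at $x$ is immediate from $U(x)\phi=\phi$. Vanishing at $y$ comes from moving $I-U(t)$ to the right-hand slot: $g(t) = \la \phi, (I-U(t)^\ast)\psi\ra$, and since $U(y)$ is unitary, $U(y)\psi=\psi$ gives $U(y)^\ast\psi=\psi$, hence $g(y)=0$.

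Next I would compute the first two derivatives of $g$. Using $U'(t) = iD(t)U(t)$, we have
\[
g'(t) = -\la U'(t)\phi,\psi\ra = -i\la D(t)U(t)\phi,\psi\ra,
\]
so evaluating at $t=x$ and invoking $U(x)\phi=\phi$ gives $g'(x) = -i\la D(x)\phi,\psi\ra$. Thus the quantity we want to estimate is precisely $|g'(x)|$. Differentiating once more, $g''(t) = -\la U''(t)\phi,\psi\ra$, so by Cauchy--Schwarz and \eqref{eqn dminmax def},
\[
|g''(t)| \leq \|U''(t)\|\cdot\|\phi\|\cdot\|\psi\| \leq d_2\,\|\phi\|\,\|\psi\|
\]
uniformly in $t$.

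The final step is to pass from the two-point vanishing $g(x)=g(y)=0$ and the bound on $g''$ to the claimed bound on $|g'(x)|$. Taylor's formula with integral remainder gives
\[
0 = g(y) = g(x) + g'(x)(y-x) + \int_x^y (y-t)\,g''(t)\,dt = g'(x)(y-x) + \int_x^y (y-t)\,g''(t)\,dt,
\]
so
\[
g'(x) = -\frac{1}{y-x}\int_x^y (y-t)\,g''(t)\,dt,
\]
and estimating the integral using $\sup|g''|\leq d_2\|\phi\|\|\psi\|$ and $\int_x^y (y-t)\,dt = (y-x)^2/2$ (same sign bookkeeping in the case $x>y$) yields $|g'(x)| \leq \tfrac{d_2}{2}|x-y|\cdot\|\phi\|\cdot\|\psi\|$, which is exactly \eqref{eqn almost orthogonal}.

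There is no real obstacle here: the only subtle point is recognising that one should use the symmetric double-vanishing of $g$ rather than a one-sided mean value argument, since a naive Rolle-type application would only locate $g'$ at some intermediate point and would lose the factor $\tfrac12$. The Taylor-with-remainder identity is what delivers the sharp constant $d_2/2$ appearing in the statement.
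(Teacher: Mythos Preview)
Your proof is correct and is essentially the same as the paper's: the paper Taylor-expands the operator $t\mapsto U(t)U(x)^{-1}$ to second order around $t=x$, pairs with $\phi,\psi$, and uses the identity $\la\phi,\psi\ra=\la U(y)U(x)^{-1}\phi,\psi\ra$, which is exactly your double-vanishing $g(x)=g(y)=0$ in scalar form (note $g(t)=\la\phi,\psi\ra-\la U(t)U(x)^{-1}\phi,\psi\ra$ since $U(x)^{-1}\phi=\phi$). The remainder bound via $\|U''\|\le d_2$ is identical.
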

\begin{proof}
By Taylor's formula,
$$ U(y)U(x)^{-1} = I  + (y-x) U'(x)U(x)^{-1} + (y-x)^2 R,\quad \|R\|\leq
\frac{d_2}2,$$
so if $U(x)\phi=\phi$, $U(y)\psi=\psi$ then
\begin{align*}
\la \phi,\psi\ra & = \la U(x)^{-1}\phi, U(y)^{-1}\psi\ra = \la
U(y)U(x)^{-1}\phi,\psi\ra\\
            & = \la\phi,\psi\ra + i(y-x)\la D(x)\phi,\psi\ra + (y-x)^2 \la
            R\phi,\psi\ra,
\end{align*}
and this gives \eqref{eqn almost orthogonal}.
\end{proof}

\begin{proof}[Proof of Theorem \ref{thm direct sum W}]
Let $\phi_x\in W(x)$ for $x\in I\cap\calZ$, and assume $\sum_x \phi_x=0$.
Let $\phi_{x_0}$ have maximal norm among all $\phi_x$. Then $0=\la
D(x_0)\phi_{x_0},\sum_x\phi_x\ra$ gives with \eqref{eqn dminmax def} and \eqref{eqn
almost orthogonal}
\begin{align*}
\dmin \|\phixnull\|^2 & \leq \la D(x_0)\phixnull,\phixnull\ra = \left|\sum_{x\neq
x_0} \la D(x_0),\phixnull,\phi_x\ra\right| \\
 & \leq \sum_{x\neq x_0} \frac{d_2}2 |I|\cdot \|\phixnull\|\cdot\|\phi_x\|
 \leq (M-1) \frac{d_2}2 |I|\cdot \|\phixnull\|^2,
\end{align*}
so if $\dmin > (M-1)\frac{d_2}2 |I|$ then $\phixnull=0$ and hence $\phi_x=0$ for all
$x$. This implies the claim.
\end{proof}

\begin{proof}[Proof of Theorem \ref{thm stability}]
The first estimate follows easily from the fact that, by the lower bound in
\eqref{eqn mu monotone}, an eigenvalue close to one of $U(x_0)$ will turn into an
eigenvalue equal to one of $U(x)$, for some $x$ close to $x_0$:
Let $B(x)=I-U(x)$ and let $\lambda_j(x)=1-e^{i\mu_j(x)}$ be the eigenvalues of
$B(x)$. The assumption \eqref{eqn phiest} implies that
$|\lambda_j(x_0)|\leq\epsilon$ for some $j$, and this implies
$\dist(\mu_j(x_0),2\pi\Z) \leq \frac\pi 2 \epsilon$, and then
$\mu_j'\geq\dmin$ shows that there is an $x$ satisfying $|x-x_0|<\pi\epsilon/2\dmin$
and $\mu_j(x)\in2\pi\Z$, hence $x\in\calZ$, so \eqref{eqn dist to Z} follows.

For $\delta>0$ let $P_\delta(x)$ denote the orthogonal projection to the sum of the
eigenspaces of $B(x)$ with eigenvalues $|\lambda_j(x)|\leq\delta$. Then
$\|B(x_0)\phi\|\leq \epsilon \|\phi\|$ implies
\begin{equation}
\label{eqn phiPest1}
\|\phi - P_\delta(x_0)\phi\| \leq \frac\epsilon\delta \|\phi\|
\end{equation}
(see \eqref{eqn Pphi estimate}, which also applies to normal operators).
To make this a good estimate, we want to take $\delta>>\epsilon$.
Our goal is to replace $P_\delta(x_0)$ by $P_W$ here. The idea is that eigenspaces
of $B(x_0)$ with eigenvalue $|\lambda_j(x_0)|\leq\delta$ will turn into nullspaces
of $B(x)$ for some $x$ within $2\delta/\dmin$ of $x_0$, by the first part of this
proof. However, the variation of eigenspaces is much less well behaved than the
variation of eigenvalues: An eigenspace may change rapidly with $x$ if the
eigenvalue is very close to another eigenvalue. Therefore, we need to consider not
single eigenspaces but rather clusters of eigenspaces.

The variation of eigenspaces is given as follows (see \cite{Kat:PTLO}): Fix $x$. If
$B(x)$ has no eigenvalue on the circle $|\lambda|=\delta$ then, with a prime
denoting derivative in $x$,
\begin{equation}
\label{eqn variation spectral proj}
P_\delta ' = \sum_{j:|\lambda_j|<\delta} \, \sum_{k:|\lambda_k|>\delta} \frac 1
{\lambda_j-\lambda_k} (P_jB'P_k + P_kB'P_j).
\end{equation}
Here, all quantities are evaluated at $x$, and $P_j$ is the orthogonal projection to
$\Span\phi_j$.
Taking norms and using orthogonality of the $P_j$ one obtains from this, using
$\|B'\|\leq\dmax$,
\begin{equation}
\label{eqn P'estimate}
\|P_\delta '\| \leq \dmax M\left( \min_{|\lambda_j|<\delta,\,|\lambda_k|>\delta}
|\lambda_j-\lambda_k|\right)^{-1}
\end{equation}
We need to choose $\delta$ carefully to make the spectral gap not too small: Let
$s=(\epsilon'/\epsilon)^{1/(M+1)}$ and consider the $M$ disjoint subintervals
$$ [\epsilon s^k,\epsilon s^{k+1})\quad\text{ for }k=1,\dots,M$$
of $(\epsilon,\epsilon')$.
Since $B(x_0)$ has $M$ eigenvalues and one of them has absolute value
$\leq\epsilon$, at least one of these intervals contains no $|\lambda_j(x_0)|$.
Assume
\begin{equation}
\label{eqn delta assn}
[\delta,\delta s),\ \delta=\epsilon s^k,\ \text{ contains no } |\lambda_j(x_0)|.
\end{equation}
We then have:
\begin{enumA}
\item
The eigenvalues of $B(x_0)$ with $|\lambda_j(x_0)|\leq \delta$ are in 1-1
correspondence with those $x\in\calZ$ (counted with multiplicity $\dim W(x)$)
satisfying
$$ |x-x_0| \leq \delta' := \frac {2\delta}{\dmin}.$$
(Proof: Each such eigenvalue turns into a zero of $\lambda_j(x)$ for such an $x$, by
the argument at the beginning of this proof. Conversely, if $\lambda_j(x)=0$ then
$|\lambda_j(x_0)|\leq |x-x_0|\dmax$ since $|\lambda_j'|=\mu_j'\leq\dmax$ and hence
$|\lambda_j(x_0)|\leq 2\delta\frac\dmax\dmin < \delta s$ provided
$\epsilon/\epsilon'$ is sufficiently small (and therefore $s$ big),
and by \eqref{eqn delta assn} this implies further $|\lambda_j(x_0)| < \delta$.)
\item
The smaller interval $(\delta+2\delta\frac\dmax\dmin,\delta s -
2\delta\frac\dmax\dmin)$ contains no $|\lambda_j(x)| $ for any $x$ with $|x-x_0|
\leq \delta'$.
(Follows directly from $|\lambda'_j|\leq\dmax$.)
\end{enumA}
The length of the interval in B) is $\delta(s-1-4\frac\dmax\dmin)$ which is $\geq
\delta s/2>0$ if $s$ is sufficiently big.
Choose $\deltatilde$ in this interval, then we get from \eqref{eqn P'estimate}
$\|P_\deltatilde'(x)\| \leq \dmax M \left(\delta s/2\right)^{-1}$ for
$|x-x_0|\leq\delta'$.
Integration gives
\begin{equation}
\label{eqn diff Pdelta}
\|P_\deltatilde(x)- P_\deltatilde(x_0)\| \leq \epsilon_1 := \frac4s \frac\dmax\dmin
M
\quad\text{ for }|x-x_0|\leq\delta'.
\end{equation}
This implies
\begin{equation}
\label{eqn projest}
\|\psi - P_\delta(x_0)\psi\| \leq \epsilon_1 \|\psi\|
\quad\text{ for } x\in\calZ,\ |x-x_0|\leq\delta',\ \psi\in W(x)
\end{equation}
since $P_\deltatilde(x)\psi=\psi$ then and $P_\deltatilde(x_0) = P_\delta(x_0)$.

Next we want to extend this estimate to $\psi\in W':=\bigoplus_{|x-x_0|\leq\delta'}
W(x)$. For this it is essential that, by \eqref{eqn almost orthogonal} and the
positive definiteness of $D(x)$, the angles between different $W(x)$ are bounded
away from zero. To carry this out, we first derive from \eqref{eqn almost
orthogonal} an estimate where all $D(x)$ are replaced by $D(x_0)$: From
$D'=U''U^{-1} + D^2$ we have $\|D'\| \leq d_2+\dmax^2$; integration yields
$\|D(x_0)\|\leq \|D(x)\|+(d_2+\dmax^2)\delta'$ for $|x-x_0|\leq\delta'$, and then
\eqref{eqn almost orthogonal} gives, with $D_0:= D(x_0)$,
\begin{equation}
\label{eqn almost orthogonal 2}
|\la D_0\psi_x,\psi_y\ra| \leq \delta'' \|\psi_x\|\cdot\|\psi_y\|,\quad
\delta'':=\delta'(d_2+\dmax^2+d_2)
\end{equation}
for $\psi_x\in W(x)$, $\psi_y\in W(y)$, $x\neq y$ and $|x-x_0|\leq\delta'$,
$|y-x_0|\leq\delta'$.
Introduce the scalar product $(\phi,\psi)_\Dn :=\la D_0\phi,\psi\ra$ on $V$, with
norm $\|\psi\|_\Dn = \sqrt{\la D_0\psi,\psi\ra}$, then
\begin{equation}
\label{eqn Dnorm est}
\dmin \|\psi\|^2 \leq \|\psi\|^2_\Dn \leq \dmax \|\psi\|^2
\end{equation}
so \eqref{eqn almost orthogonal 2} gives
\begin{equation}
\label{eqn almost orthogonal 3}
|(\psi_x,\psi_y)_\Dn| \leq \frac{\delta''}{\dmin} \|\psi_x\|_\Dn\cdot\|\psi_y\|_\Dn
\end{equation}
for the same $\psi_x,\psi_y$ as there. By simple standard calculations this implies
\begin{equation}
\label{eqn ao 4}
\|\sum_x\psi_x\|_\Dn^2 \geq \left(1-\frac{\delta''}\dmin (\Mtilde - 1)\right) \sum_x
\|\psi_x\|_\Dn^2
\end{equation}
where the sums are over all $x\in\calZ$ with $|x-x_0|\leq\delta'$, $\psi_x\in W(x)$
are arbitrary and $\Mtilde$ is the number of summands. Now by A) above $\Mtilde\leq
M$.
Since $\delta<\epsilon'$ the expression in parantheses is $\geq \frac12$ for sufficiently small $\epsilon'$,
so \eqref{eqn ao 4} gives $\sum_x \|\psi_x\|^2_\Dn \leq 2\|\sum_x \psi_x\|_\Dn^2$,
which with \eqref{eqn Dnorm est} gives $\sum_x \|\psi_x\|^2 \leq
2\frac\dmax\dmin\|\sum_x \psi_x\|^2$ and so
\begin{equation}
\label{eqn sum est}
\sum_x \|\psi_x\| \leq \sqrt{2M\frac\dmax\dmin}\, \|\sum_x\psi_x\|.
\end{equation}
We return to \eqref{eqn projest}. If $\psi\in W'=\bigoplus_{|x-x_0|\leq\delta'}
W(x)$, $\psi=\sum_x\psi_x$ then we get from \eqref{eqn sum est}
\begin{align}
\|\psi - P_\delta(x_0)\psi\| &\leq \sum_x \|\psi_x - P_\delta(x_0)\psi_x\|\leq
\epsilon_1 \sum_x \|\psi_x\|\notag\\
\label{eqn projest2}
& \leq \epsilon_2\|\psi\|,\quad \epsilon_2:=\epsilon_1\sqrt{2M\frac\dmax\dmin}
\end{align}
This means that $W'$ is close to $W_0:=\Ran P_\delta(x_0)$. Now by A) above, $W'$
and $W_0$ have the same dimension, so this implies by standard arguments  that $W_0$ is also close to $W'$, more precisely,
with $P_{W'}:V\to W'$ the orthogonal projection,
\begin{equation}
\label{eqn proj close}
\|\psi - P_{W'}\psi \| \leq \epsilon_2 \|\psi\|,\quad \psi\in W_0.
\end{equation}
Finally, assume that $\phi\in V$ satisfies \eqref{eqn phiest}. Then \eqref{eqn
phiPest1} with $\delta$ as above together with \eqref{eqn proj close} give, again by
standard facts about distances of subspaces,
\begin{equation}
\label{eqn PW' est}
 \|\phi - P_{W'}\phi \| \leq (\epsilon_2 + \frac\epsilon\delta) \|\phi\|.
\end{equation}
Putting everything together, we have $\delta=\epsilon s^k$ for some
$k\in\{1,\dots,M\}$ with $s=(\epsilon'/\epsilon)^{1/(M+1)}$, which implies $\epsilon/\delta \leq s^{-1}$ and
$\delta'\leq \epsilon'$.
Also, $\epsilon_1=\frac4s \frac\dmax\dmin$
and
$\epsilon_2 = \epsilon_1\sqrt{2M\frac\dmax\dmin}$. Altogether, the right hand side
of \eqref{eqn PW' est} is bounded by $Cs^{-1}\|\phi\|$, and since the left hand
side only decreases when replacing $W'$ by the bigger space
$\bigoplus_{|x-x_0|\leq\epsilon'} W(x)$, the Theorem is proven.
\end{proof}

\section{Monotonicity of $U$ and of its logarithm}\label{secpositive}
Denote by $\calS(V)$ and $\calU(V)$ the spaces of symmetric resp. unitary operators on $V$. The map  $\calS (V)\to \calU(V)$, $A\mapsto e^{iA}$  has non-singular differential everywhere and is surjective, so it is a covering map. Hence any curve $U:x\mapsto U(x)$ in $\calU(V)$ may be lifted to a curve $x\mapsto A(x)$ in $\calS(V)$ (that is, $U(x)=e^{iA(x)}$ for all $x$) and the lift is unique if one prescribes it for one value of $x$. Furthermore, the lifted curve is analytic if $U$ is.

\begin{proposition}
\label{prop monotone log}
Let $x\mapsto A(x)$ be a $C^1$ family of symmetric operators and $U(x)=e^{iA(x)}$. Then
\begin{equation}
\label{eqn U' A' relation}
\frac1i U' U^{-1} = \int_0^1 e^{i\tau A} A' e^{-i\tau A}\,d\tau.
\end{equation}
Here, a prime denotes differentiation with respect to $x$, and $U,U',A,A'$ are taken at a fixed $x$.
\end{proposition}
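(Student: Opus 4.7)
My plan is to prove the formula by introducing the interpolating family
$V(s,x) := e^{isA(x)}$, writing down an inhomogeneous linear ODE in $s$ for $\partial_x V$, and solving it by variation of parameters. This is the standard way to differentiate matrix exponentials when $A(x)$ and $A(x')$ fail to commute, and for fixed $x$ one uses only the trivial commutation of $A(x)$ with $e^{itA(x)}$.

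First, I would observe that $V$ solves $\partial_s V = iA\, V$ with $V(0,x)=I$. Differentiating both the ODE and the initial condition with respect to $x$ gives, for $W(s,x):=\partial_x V(s,x)$,
\begin{equation*}
\partial_s W = iA'\, V + iA\, W,\qquad W(0,x)=0.
\end{equation*}
The homogeneous problem $\partial_s Y = iA\, Y$ has propagator $Y(s) = e^{isA}Y(0)$, so variation of parameters yields
\begin{equation*}
W(s,x) = \int_0^s e^{i(s-\tau)A}\, (iA')\, e^{i\tau A}\, d\tau.
\end{equation*}

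Next, I set $s=1$, so $U' = W(1,x)$, and multiply on the right by $U^{-1} = e^{-iA}$. Since $e^{i\tau A}e^{-iA} = e^{-i(1-\tau)A}$, this gives
\begin{equation*}
U' U^{-1} = \int_0^1 e^{i(1-\tau)A}\, (iA')\, e^{-i(1-\tau)A}\, d\tau.
\end{equation*}
The substitution $\sigma = 1-\tau$ reverses the interval but preserves the integrand's form, and dividing by $i$ yields \eqref{eqn U' A' relation}.

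There is no genuine obstacle here: the only subtlety is not to inadvertently commute $A(x)$ past $A(x')$ for $x\neq x'$, which is avoided by performing all manipulations at fixed $x$. As a sanity check, one sees that when $A(x) = xA_0 + B$ with $A_0, B$ commuting, the integrand collapses to $A' = A_0$ and the formula reduces to the familiar $U'U^{-1} = iA_0$; more generally, the right hand side is manifestly symmetric (the integrand $e^{i\tau A}A'e^{-i\tau A}$ is symmetric for each $\tau$), as must be $\frac{1}{i}U'U^{-1}$.
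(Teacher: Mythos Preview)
Your proof is correct and follows essentially the same route as the paper: both introduce $V(s,x)=e^{isA(x)}$, derive the inhomogeneous linear ODE $\partial_s(\partial_x V)=iA'\,V+iA\,(\partial_x V)$ with zero initial data, solve it by Duhamel/variation of parameters, and then set $s=1$ and right-multiply by $U^{-1}$. The only cosmetic differences are that the paper absorbs the factor $\tfrac1i$ into its definition of $W$ and performs the substitution $\tau\mapsto 1-\tau$ before evaluating at $s=1$, whereas you do it afterward.
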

\begin{proof}
Let $W(t,x) = \frac1i \frac\partial{\partial x} e^{itA(x)}$. Then
$$ \frac\partial{\partial t}W = \frac1i \frac\partial{\partial x}\frac\partial{\partial t} e^{itA} = \frac\partial{\partial x} Ae^{itA}
= A'e^{itA} + iA W. $$
Now fix $x$, and let $B(t)=A'e^{itA}$. The solution of the ordinary differential equation $\frac d{dt} Y (t)= B(t) + iA Y(t)$ with $Y(0)=0$ is
$$ Y(t) = \int_0^t e^{i(t-\tau)A}B(\tau)\, d\tau$$
as can be verified directly. (This is called Duhamel's principle.)
Now $W(\cdot,x)$ satisfies the same first order ODE and initial condition as $Y$, so it follows that $W(t,x)=Y(t)$ for all $t$. Now rewrite $Y(t)=\int_0^t e^{i\tau A}B(t-\tau)\,d\tau$ and set $t=1$ to obtain \eqref{eqn U' A' relation}.
\end{proof}

\begin{corollary}
Let $x\mapsto A(x)$ be a $C^1$ family of symmetric operators. If $A'(x)>0$ for each $x$ then the unitary family $U(x)=e^{iA(x)}$ is monotone.
\end{corollary}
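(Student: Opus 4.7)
The plan is to read off the corollary directly from Proposition \ref{prop monotone log}, which expresses $D = \frac{1}{i} U' U^{-1}$ as an integral of unitary conjugates of $A'$.

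First I would fix $x$ and apply Proposition \ref{prop monotone log} to write
\begin{equation*}
D(x) = \int_0^1 e^{i\tau A(x)} A'(x) e^{-i\tau A(x)} \, d\tau.
\end{equation*}
For each fixed $\tau \in [0,1]$, the operator $e^{i\tau A(x)}$ is unitary (because $A(x)$ is symmetric), so $e^{-i\tau A(x)} = (e^{i\tau A(x)})^{-1} = (e^{i\tau A(x)})^*$. Hence the integrand is a unitary conjugation of $A'(x)$, and in particular it is symmetric.

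Next I would check positivity pointwise in $\tau$: for any $v \in V \setminus \{0\}$, setting $w_\tau = e^{-i\tau A(x)} v$, we have
\begin{equation*}
\langle e^{i\tau A(x)} A'(x) e^{-i\tau A(x)} v, v \rangle = \langle A'(x) w_\tau, w_\tau \rangle > 0,
\end{equation*}
because $w_\tau \neq 0$ (unitaries are invertible) and $A'(x) > 0$ by hypothesis. Thus each integrand $e^{i\tau A} A' e^{-i\tau A}$ is a positive symmetric operator, and integrating a continuous family of positive operators over $[0,1]$ yields a positive operator. Therefore $D(x) > 0$ for every $x$, which is exactly the monotonicity of $U$.

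There is no serious obstacle here; the only subtlety is remembering that conjugation by a unitary preserves positivity, which is immediate from the displayed identity above. The corollary is essentially a restatement of the Proposition together with this elementary observation.
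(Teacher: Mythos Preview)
Your proof is correct and follows exactly the paper's approach: apply Proposition~\ref{prop monotone log} and observe that each integrand $e^{i\tau A}A'e^{-i\tau A}$ is positive since unitary conjugation preserves positivity. You have simply spelled out in detail what the paper states in a single sentence.
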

\begin{proof}
Positivity of $A'$ implies positivity of $e^{i\tau A}A'e^{-i\tau A}$ for each $\tau$, so the claim follows from \eqref{eqn U' A' relation}.
\end{proof}
The converse is not true. As an example let
$A_0 = \begin{pmatrix}
0 & -\pi \\ \pi & 0
\end{pmatrix}$,
$B= \begin{pmatrix}
-b & 0 \\ 0 & 1
\end{pmatrix}
$ with $0<b<1$
and $A(x) = A_0+xB$. Then $e^{i\tau A_0} = \begin{pmatrix}
\cos \pi \tau & -\sin \pi \tau \\ \sin \pi \tau & \cos \pi \tau
\end{pmatrix}
$
is rotation by $\pi \tau$, and a short calculation shows that
$\int_0^1 e^{i\tau A_0}B e^{-i\tau A}\,d\tau = \frac{1-b}{2} I$ (this is also clear without calculation since the result must be rotation invariant with trace equal to $\tr B = 1-b$; in essence, the negative direction of $B$ gets averaged away against the positive direction). Therefore, $U(x)=e^{iA(x)}$ is monotone near $x=0$ but $A'(0)=B$ is not positive.

\section{Two parameter families}\label{sec two parameters}
\begin{theorem}
\label{thm unitary perturb}
Let $U(x,y)$ be a unitary operator in a finite-dimensional Hermitian vector space
depending real analytically on $x,y\in\R$. Assume
\begin{equation}
\label{eqn positivity}
\frac 1i \frac{\partial U}{\partial x} U^{-1} > 0\quad \text{ at } (x_0,y_0).
\end{equation}
Then the set $\{(x,y):\, U(x,y)\text{ has eigenvalue one}\}$ is, in a neighborhood
of $(x_0,y_0)$, a union of real analytic curves $x=x_j(y)$. The corresponding
projections $P_j(y)$ to the eigenspace of $U(x_j(y),y)$ with eigenvalue one are also
analytic functions of $y\neq y_0$, extending analytically to $y=y_0$, and $\sum_j
P_j(y_0)$ is the projection to $\ker (I-U(x_0,y_0))$.
\end{theorem}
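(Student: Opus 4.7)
The plan is to pass to the Hermitian side via the logarithm, reduce to a $k\times k$ Hermitian analytic family by a spectral-projection intertwiner, and then identify the desired curves as roots of a \emph{hyperbolic} polynomial in $x$ with analytic coefficients in the single variable $y$, at which point Rellich's theorem supplies an analytic parametrization.

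For the reduction: by Proposition \ref{prop monotone log}, write $U = e^{iH}$ locally with $H$ Hermitian and analytic, normalized so that $H(x_0,y_0)$ has $0$ as a $k$-fold eigenvalue, $k := \dim\ker(I-U(x_0,y_0))$, and no other eigenvalue in $2\pi\Z$ nearby. Set $W = \ker H(x_0,y_0)$. Choose a contour $\gamma$ encircling $0$ and no other eigenvalue of $H(x_0,y_0)$, form the analytic rank-$k$ spectral projection $P(x,y) = -\frac{1}{2\pi i}\oint_\gamma (\lambda - H(x,y))^{-1}\,d\lambda$, and use the Kato--Nagy transformer to build an analytic unitary intertwiner $T(x,y)\colon W\to P(x,y)V$ with $T(x_0,y_0)=I$. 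Then $\tilde H(x,y) := T(x,y)^* H(x,y) T(x,y)$, viewed as a $k\times k$ Hermitian analytic family on $W$, satisfies $\tilde H(x_0,y_0) = 0$, and restricting \eqref{eqn U' A' relation} to $W$ together with the monotonicity hypothesis gives $\partial_x\tilde H(x_0,y_0) > 0$. The set in question is then $\{(x,y) : \det\tilde H(x,y) = 0\}$ in a neighborhood of $(x_0,y_0)$.

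By continuity $\partial_x \tilde H > 0$ in a neighborhood, so Rellich's theorem applied to the one-parameter Hermitian family $\tilde H(\cdot,y)$ (for each real $y$ near $y_0$) yields $k$ strictly increasing real-analytic eigenvalue branches, each vanishing at a unique real value $\xi_j(y)$. In particular $\det\tilde H(x,y_0)$ has a zero of order exactly $k$ at $x_0$, so the Weierstrass preparation theorem produces $\det\tilde H(x,y) = u(x,y)\,W(x,y)$ with $u(x_0,y_0)\neq 0$ and a Weierstrass polynomial $W$ of degree $k$ in $x$ with coefficients analytic in $y$. Since the $k$ values $\xi_j(y)$ account for all $k$ roots of $W(\cdot,y)$ and are real for each real $y$, $W$ is a hyperbolic polynomial in $x$ with coefficients analytic in the single variable $y$. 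By the theorem on analytic roots of such hyperbolic polynomials (a one-variable consequence of Rellich's theorem for Hermitian analytic families; cf.\ Kato, \emph{Perturbation Theory for Linear Operators}), one obtains an analytic factorization $W(x,y) = \prod_{j=1}^k (x - x_j(y))$ with each $x_j$ real analytic and $x_j(y_0) = x_0$, producing the $k$ analytic curves.

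For the projections: for $y\neq y_0$ close to $y_0$ the zero eigenvalue of $\tilde H(x_j(y),y)$ is simple, so standard one-parameter analytic perturbation theory supplies an analytic unit eigenvector $\phi_j(y)$ of $U(x_j(y),y)$ and hence an analytic rank-one projection $P_j(y)$ in $V$. Uniform boundedness $\|P_j\|=1$ combined with analyticity in a complex neighborhood of the real axis punctured at $y_0$ yields analytic extension across $y = y_0$ by Riemann's removable-singularity theorem. The identity $\sum_j P_j(y_0) = P_W$ is then verified via a limiting form of Lemma \ref{lemma almost orthogonal} applied to $U(\cdot,y)$ for each fixed $y$, combined with Theorem \ref{thm direct sum W} in the limit to ensure that the limiting vectors $\phi_j(y_0)$ are linearly independent in $W$ and mutually orthogonal with respect to the inner product induced by the intertwiner $T$. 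The main technical hurdle is the hyperbolicity step: without identifying $W$ as hyperbolic and invoking the analytic-roots theorem one would at best obtain Puiseux branches in $y^{1/m}$ rather than the asserted analytic curves $x=x_j(y)$.
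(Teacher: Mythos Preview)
Your reduction to a $k\times k$ Hermitian family via the logarithm and the Kato--Nagy intertwiner, followed by Weierstrass preparation and the analytic-roots theorem for hyperbolic polynomials in one parameter, is a valid route to the curves $x=x_j(y)$. It differs from the paper's argument in both steps: the paper reduces via a Schur complement of $W=U-I$ with respect to the splitting $V_0\oplus V_0^\perp$, $V_0=\ker W(x_0,y_0)$, rather than via an intertwiner on the logarithm; and it obtains the curves by noting that, after simultaneously diagonalizing $A:=\partial_x A(0,0)>0$ and $B:=\partial_y A(0,0)$, the leading homogeneous part of $\det A(x,y)$ is the explicit product $\prod_j (xa_j+yb_j)$, and then applying a direct polar-coordinates (blow-up) argument. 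Your approach uses heavier machinery but is more systematic; the paper's is more elementary but relies on the concrete factorization of the leading term.

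Your treatment of the projections, however, has a genuine gap. The assertion that for $y\neq y_0$ the zero eigenvalue of $\tilde H(x_j(y),y)$ is \emph{simple} is false in general: two or more of the curves $x_j$ can coincide identically (this already occurs in the linear model $xA+yB$ whenever two of the ratios $b_j/a_j$ agree), and then the kernel has dimension $\geq 2$ for every $y$. Fortunately simplicity is not needed. The paper simply sets $C_j(y)=U(x_j(y),y)$, an analytic one-parameter unitary family, and invokes Kato's one-parameter theory: the eigenprojection onto $\ker(I-C_j(y))$ is analytic for $y\neq y_0$ near $y_0$ and extends analytically across $y_0$. Your subsequent appeal to Lemma~\ref{lemma almost orthogonal} and Theorem~\ref{thm direct sum W} to establish $\sum_j P_j(y_0)=P_W$ is likewise unnecessary and not clearly justified as written; once the $P_j$ extend analytically and, for $y\neq y_0$, have ranges summing to a $k$-dimensional subspace of $\ker(I-U(x_j(y),y))$, the identity at $y_0$ follows by continuity of rank and inclusion.
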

Note that in general it is not true that the eigenvalues and eigenprojections of
$U(x,y)$ may be arranged as real analytic functions of $(x,y)$, see \cite{Kat:PTLO},
II.6.1. While the example given there (in the analogous case of self-adjoint
operators) does not satisfy the positivity assumption \eqref{eqn positivity}, it can
be easily modified so it does, by adding a multiple of the identity. Explicitly, one
may take $A(x,y)=\begin{pmatrix}
3x & y \\ y & x
\end{pmatrix}$ and $U(x,y)=e^{iA(x,y)}$ and $(x_0,y_0)=(0,0)$.

Note also that the statement of the theorem reduces to the well-known facts of one-parameter perturbation theory in case $U(x,y)=e^{ix}U(y)$, for an analytic one-parameter family of unitary operators $U(y)$.
\begin{proof}
Let w.l.o.g.\ $x_0=y_0=0$.

We first consider the case $U(0,0)=I$. Let $A=\frac1i \log U$ near $(x,y)=(0,0)$.
Then the operators $A(x,y)$ are self-adjoint, $A(0,0)=0$, and  $\partial A/\partial
x (0,0)> 0$ since it equals $\frac1i\frac{\partial U}{\partial x}U^{-1}(0,0)$ by \eqref{eqn U' A' relation}, and we need to prove that the set $S=\{(x,y):\, A(x,y)\text{ is not
invertible }\}$ is a union of real analytic curves as claimed.\footnote{I am grateful to Y. Colin-de-Verdi\`ere for a fruitful
discussion on this}

If $A(x,y)=xA+yB$ is linear in $x,y$, then (since $A>0$) $A$ and $B$ may be
diagonalized simultaneously, hence may be assumed to be diagonal, and then it is
obvious that $S$ is a union of lines, $x_j(y)=y b_j/a_j$, where $a_j,b_j$ are the
diagonal entries of $A,B$, respectively.
In general, write $A(x,y)=xA+yB + C(x,y)$ with $C(x,y)=O(|x,y|^2)$ and w.l.o.g.\
$A,B$ diagonal. Then, if the dimension of the vector space is $M$,
$$\det A(x,y) = \prod_{j=1}^M (xa_j + yb_j) + O(|x,y|^{M+1}),$$
and a standard argument (using polar coordinates) shows that the zero set of this
function is a union of real analytic lines
$x=x_j(y)$, having tangents $xa_j+yb_j=0$ at the origin.

If $U(0,0)$ is arbitrary, let $W=U-I$ (where $I$ denotes the identity) and $V_0=\Ker
W(0,0)$ and $V_1$ its orthogonal complement. Let $W_{kl}(x,y)$, $k,l=0,1$, be the
'submatrices' of $W(x,y)$ corresponding to the decomposition $V_0\oplus V_1$. Then
$W_{00}$, $W_{01}$ and $W_{10}$ vanish at $(x,y)=(0,0)$, and $W_{11}$ is invertible
at $(0,0)$ and hence in a neighborhood. Then the equation $W(v_0\oplus v_1)=0$,
where $v_0\in V_0$, $v_1\in V_1$,  is equivalent to $W'v_0=0$, where
$W'=W_{00}-W_{01}W_{11}^{-1}W_{10}$, and  $v_1=-W_{11}^{-1}W_{10}v_0$.
Therefore, $U(x,y)$ has eigenvalue one iff the operator $U'(x,y)=W'(x,y)+I_{V_0}$ on
$V_0$ has eigenvalue one. One easily checks that $U'(x,y)$ is unitary.
Since $W'(0,0)=0$ the claim now follows from the case considered first.

Let $C_j(y)=U(x_j(y),y)$ and let $P_j(y)$ be the projection to $\Ker C_j(y)$. Since
$C_j$ is analytic in $y$, its eigenprojections are analytic for $y\neq 0$ (but near
zero) and extend analytically to $y=0$ (see \cite{Kat:PTLO}), so this is in
particular true for $P_j$.
\end{proof}

\bibliographystyle{amsplain}
\bibliography{mypapers,dglib,mathlib}

\end{document}